\newcommand{\excise}[1]{}%{$\star$\textsc{#1}$\star$}
\newtheorem{thm}{Theorem}[section]
\newtheorem{lemma}[thm]{Lemma}
\newtheorem{cor}[thm]{Corollary}
\theoremstyle{definition}
\newtheorem{example}[thm]{Example}
\newtheorem{remark}[thm]{Remark}
\newtheorem{defn}[thm]{Definition}
\numberwithin{equation}{section}
\newcommand{\ring}[1]{\ensuremath{\mathbb{#1}}}
\renewcommand\>{\rangle}
\newcommand\<{\langle}
\newcommand\ZZ{\ring{Z}}
\def\ol#1{{\overline {#1}}}
\DeclareMathOperator\lcm{lcm} % lcm
\begin{document}%%%%%%%%%%%%%%%%%%%%%%%%%%%%%%%%%%%%%%%%%%%%%%%%%%%%%%%%
%%%%%%%%%%%%%%%%%%%%%%%%%%%%%%%%%%%%%%%%%%%%%%%%%%%%%%%%%%%%%%%%%%%%%%%%

\mbox{}
%\vspace{-2ex}%-1.1743pt}
\title{Squarefree divisor complexes of certain numerical semigroup elements}

\author[J.~Autry]{Jackson Autry}
\address{Mathematics Department\\San Diego State University\\San Diego, CA 92182}
\email{j.connor.autry@gmail.com}

\author[P.~Graves]{Paige Graves}
\address{Mathematics Department\\University of La Verne\\La Verne, CA 91750}
\email{paige.graves@laverne.edu}

\author[J.~Loucks]{Jessie Loucks}
\address{Mathematics Department\\Sacramento State University\\Sacramento, CA 95819}
\email{jessieloucks13@gmail.com}

\author[C.~O'Neill]{Christopher O'Neill}
% \address{Mathematics Department\\University of California Davis\\Davis, CA 95616}
\address{Mathematics Department\\San Diego State University\\San Diego, CA 92182}
\email{coneill@math.ucdavis.edu}

\author[V.~Ponomarenko]{Vadim Ponomarenko}
\address{Mathematics Department\\San Diego State University\\San Diego, CA 92182}
\email{vponomarenko@sdsu.edu}

\author[S.~Yih]{Samuel Yih}
\address{Mathematics Department, Pomona College, 610 College Ave., Claremont, CA 91711} 
\email{sy012014@mymail.pomona.edu}

% \author{Jackson Autry, Paige Graves, Jessie Loucks, Christopher O'Neill,\\ Vadim Ponomarenko, and Samuel Yih}

\subjclass[2010]{Primary: 13D02, 20M13, 20M14.}

\keywords{nonunique factorization, numerical semigroup, squarefree divisor complex}

\date{\today}

\begin{abstract}
A numerical semigroup $S$ is an additive subsemigroup of the non-negative integers with finite complement, and the squarefree divisor complex of an element $m \in S$ is a simplicial complex $\Delta_m$ that arises in the study of multigraded Betti numbers.  We compute squarefree divisor complexes for certain classes numerical semigroups, and exhibit a new family of simplicial complexes that are occur as the squarefree divisor complex of some numerical semigroup element.
\end{abstract}

\maketitle

%%%%%%%%%%%%%%%%%%%%%%%%%%%%%%%%%%%%%%%%%%%%%%%%%%%%%%%%%%%%%%%%%%%%%%%%%
\section{Introduction}%%%%%%%%%%%%%%%%%%%%%%%%%%%%%%%%%%%%%%%%%%%%%%%%%%%
\label{sec:intro}%%%%%%%%%%%%%%%%%%%%%%%%%%%%%%%%%%%%%%%%%%%%%%%%%%%%%%%%
%raggedbottom%%%%%%%%%%%%%%%%%%%%%%%%%%%%%%%%%%%%%%%%%%%%%%%%%%%%%%%%%%%%

A numerical semigroup is a cofinite subset $S \subseteq \ZZ_{\ge 0}$ that is closed under addition.  The numerical semigroup generated by $\{t_1,t_2,\ldots, t_k\} \subset \ZZ_{\ge 0}$ is the set
$$\<t_1, t_2, \ldots, t_k\> = \{\alpha_1t_1 + \cdots + \alpha_kt_k : \alpha_i \in \ZZ_{\ge 0}\}.$$
If $t \in S$ is not a sum of strictly smaller elements of $S$, we call $t$ irreducible in $S$.  The~set $\{n_1, n_2,\ldots, n_d\}$ of irreducible elements of $S$ is the unique minimal generating set of $S$, i.e.\ $S = \langle n_1, n_2, \ldots, n_d\rangle$.  Unless otherwise stated, for the remainder of the document, whenever we write $S = \<n_1, \ldots, n_d\>$, we assume $n_1, \ldots, n_d$ are precisely the irreducible elements of $S$.  In this case, we call $d$ the embedding dimension of $S$.  

Given numerical semigroup $S$, the Frobenius number of $S$, denoted $F(S)$, is the largest element of $\ZZ_{\ge 0}\setminus S$.  We call $x\in\ZZ_{\ge 0}\setminus S$ a pseudo-Frobenius number if $x+s\in S$ for all $s\in S\setminus \{0\}$, and denote by $PF(S)$ the set of all such $x$, noting in particular that $F(S)\in PF(S)$.  For a more thorough introduction to numerical semigroups, see \cite{MR2549780}.

Fix a numerical semigroup $S = \<n_1, n_2, \ldots, n_d\>$, and let $[d] = \{1, \ldots, d\}$.  A simplicial complex is a collection $\Delta \subset 2^{[d]}$ of subsets of $[d]$ (called faces) such that if $F \in \Delta$ and $F' \subset F$, then $F' \in \Delta$.  The maximal faces of $\Delta$ are called facets.  Given a face $F \subset [d]$ and $k \in \ZZ$, define $F^{\le k} = \{i \in F : i \le k\}$ and $F^{>k} = \{i \in F : i > k\}$.  The Euler characteristic of $\Delta$ is
$$\chi(\Delta) = \sum_{F \in \Delta}(-1)^{|F|}.$$
For each face $F \subset [d]$, write $n_F = \sum_{i \in F} n_i$.  Given $m \in S$, we define the squarefree divisor complex of $m$ in $S$ to be the simplicial complex
$$\Delta^S_m = \{F\in [d]: m - n_F \in S\}.$$
When there can be no confusion, we omit the superscript and write $\Delta_m$ instead of $\Delta_m^S$.  For a full treatment on simplicial complexes, see \cite[Section~2]{stanleybook}.  

Squarefree divisor complexes were first defined in~\cite{MR1481087} in the context of semigroup rings, for use in studying multigraded Betti numbers.  Specifically, they prove the Hilbert series of a numerical semigroup $S$ (see \cite{bookapplications}) can be expressed as
$$\mathcal H(S;t) = \sum_{m \in S} t^m = \frac{\sum_{m \in S} \chi(\Delta_m)t^m}{(1 - t^{n_1}) \cdots (1 - t^{n_d})}$$
Hilbert series have long been used to study numerical semigroups~\cite{continuousdiscrete}.  There has also been a recent surge of interest~\cite{frobvectors,shortgenfuncs,restrictedpartitionsyzygies,cyclotomicmonthly} yielding, among other things, characterizations of the numerator above in several cases~\cite{cyclotomic1,almostsymmetrichilbert,aperyhilbert}.  
The authors of~\cite{MR1481087} also pose the question ``which simplicial complexes $\Delta$ are realizable as the squarefree divisor complex~$\Delta_m^S$ for some $S$ and some $m \in S$'', and provided a complete answer in the case where $\Delta$ is a graph.  

Our contribution is twofold.  First, we provide a novel method of iteratively constructing squarefree divisor complexes and use it to produce a new family of realizable squarefree divisor complexes.  Second, for several classes of numerical semigroups, we explicitly compute the squarefree divisor complex of every element~$m$ for which~$\Delta_m$ has nonzero Euler characteristic (i.e.\ those appearing in the numerator of the Hilbert series equation above).  For each class of semigroups, the elements with nonzero Euler characteristic are known \cite{cyclotomic1}, but the original proof used high level machinery 
% (i.e.\ structural results about complete intersection numerical semigroups) 
rather than squarefree divisor complexes.  Our work provides an alternative proof ``from first principles'' in these cases.

%%%%%%%%%%%%%%%%%%%%%%%%%%%%%%%%%%%%%%%%%%%%%%%%%%%%%%%%%%%%%%%%%%%%%%%%%
\section{Realization of simplicial complexes}%%%%%%%%%%%%%%%%%%%%%%%%%%%%
\label{sec:realization}%%%%%%%%%%%%%%%%%%%%%%%%%%%%%%%%%%%%%%%%%%%%%%%%%%
%raggedbottom%%%%%%%%%%%%%%%%%%%%%%%%%%%%%%%%%%%%%%%%%%%%%%%%%%%%%%%%%%%%

The main result of this section is Corollary~\ref{c:fatforest}, which states that any fat forest (Definition~\ref{d:fatforest}) can be realized as the squarefree divisor complex of some numerical semigroup element.  Our proof comes in two steps.  
First, we give a construction that allows us to produce disjoint unions of squarefree divisor complexes (Theorem~\ref{t:disjointunion}) using a gluing construction outlined in Lemma~\ref{l:gluing}.  This construction also appeared as \cite[Lemma~2.6(a)]{MR1481087}, though it was presented without proof and with insufficiently strong hypotheses (see Example~\ref{e:disjointunionrelativelyprime}).  Second, we iterate a construction called ``inflation'' to build any fat tree one vertex at a time.  

\begin{defn}\label{d:fatforest}
A simplicial complex $\Delta$ is a \emph{fat tree} if its facets $F_1, F_2, \ldots$ can be ordered in such a way that the intersection of each $F_j$ with $F_1 \cup \cdots \cup F_{j-1}$ is a simplex.  We say $\Delta$ is a \emph{fat forest} if it is a disjoint union of fat trees.  
\end{defn}

\begin{example}\label{e:fatforest}
Intuitively, any fat tree $\Delta$ can be constructed one facet at a time by attaching each new facet along an existing face.  In doing so, no cycles can be formed.  Figure~\ref{f:fattree} depicts one such complex, whose vertices are labeled in accordance with one possible facet order.  
\end{example}

\begin{figure}
\begin{center}
\includegraphics[width=3.2in]{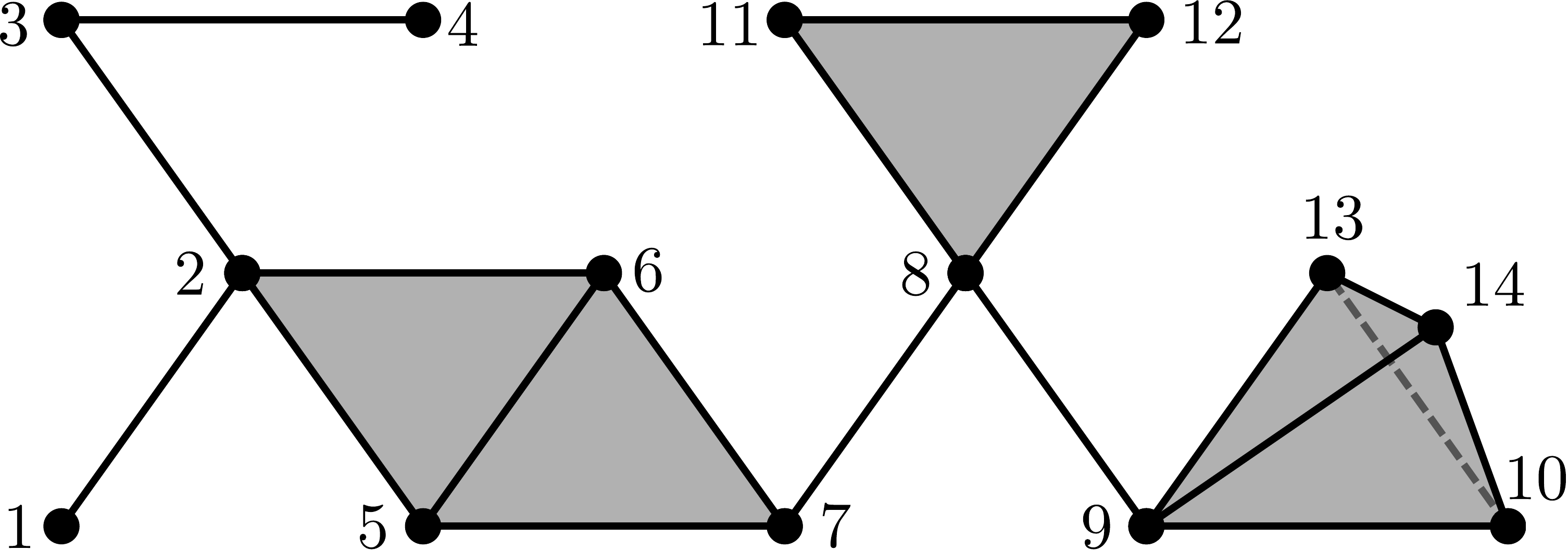}
\end{center}
\caption{An example of a fat tree.}
\label{f:fattree}
\end{figure}

\begin{lemma}[{\cite[Lemma~8.8]{MR2549780}}]\label{l:gluing}
Fix numerical semigroups $S = \<n_1, n_2, \ldots, n_d\>$, and $S' = \<n_1', n_2', \ldots, n_{d'}'\>$.  If $k \in S \setminus N$ and $k' \in S' \setminus N'$ with $\gcd(k,k') = 1$, then 
$$k'S + kS' = \<k'n_1, k'n_2, \ldots, k'n_d, kn_1', kn_2', \ldots , kn_{d'}'\>$$
has embedding dimension $d+d'$.  
\end{lemma}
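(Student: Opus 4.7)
The plan is to verify three things: (i) the set $T := k'S + kS'$ is generated additively by the $d + d'$ listed elements; (ii) $T$ is a numerical semigroup; and (iii) those generators are irredundant, so the embedding dimension is exactly $d + d'$. Steps (i) and (ii) are bookkeeping; the real content lies in (iii).

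For (i), every element of $k'S$ is a $\ZZ_{\ge 0}$-combination of $k'n_1, \ldots, k'n_d$ because $S = \<n_1, \ldots, n_d\>$, and similarly on the $S'$ side, so $T$ is generated by the listed elements. For (ii), $\gcd(k'n_1, \ldots, k'n_d, k n_1', \ldots, k n_{d'}') = \gcd(k', k) = 1$, using $\gcd(n_1, \ldots, n_d) = \gcd(n_1', \ldots, n_{d'}') = 1$ together with the hypothesis $\gcd(k, k') = 1$; thus $T$ has finite complement in $\ZZ_{\ge 0}$.

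For (iii), suppose toward a contradiction that some listed generator, say $k'n_1$, decomposes as $k'n_1 = \sum_{i \ge 2} a_i (k'n_i) + \sum_j b_j (k n_j')$ with $a_i, b_j \in \ZZ_{\ge 0}$. Rearranging gives $k'\bigl(n_1 - \sum_{i \ge 2} a_i n_i\bigr) = k \sum_j b_j n_j'$, and since $\gcd(k, k') = 1$ there exists $q \in \ZZ_{\ge 0}$ with $\sum_j b_j n_j' = k' q$ and $n_1 = \sum_{i \ge 2} a_i n_i + kq$ inside $\ZZ_{\ge 0}$. If $q = 0$ then all $b_j = 0$ and some $a_i > 0$, so $n_1$ is a nontrivial $\ZZ_{\ge 0}$-combination of the remaining minimal generators of $S$, contradicting minimality; if $q \ge 1$ and $\sum_i a_i n_i > 0$, then $n_1 = \sum_i a_i n_i + kq$ is a sum of two strictly smaller positive elements of $S$ (using $k \in S$), contradicting the irreducibility of $n_1$. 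The main obstacle is the remaining case $q \ge 1$ with $\sum_i a_i n_i = 0$, where $n_1 = kq$: this is precisely where the hypothesis $k \in S \setminus N$ is needed. If $q = 1$ then $n_1 = k$, directly contradicting $k \notin N$; if $q \ge 2$ then $n_1 = k + k(q-1)$ exhibits $n_1$ as a sum of two strictly smaller positive elements of $S$, again violating irreducibility. The symmetric argument (swapping $S \leftrightarrow S'$ and $k \leftrightarrow k'$) handles each $k n_j'$, completing the proof.
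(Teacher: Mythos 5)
The paper gives no proof of this lemma --- it is quoted from \cite{MR2549780} (Lemma~8.8) and used as a black box --- so there is no internal argument to compare against; your proposal supplies a correct, self-contained proof along the standard lines for gluings of numerical semigroups. Steps (i) and (ii) are routine as you say, and step (iii) correctly isolates the one nontrivial point: coprimality of $k$ and $k'$ turns a putative decomposition of $k'n_1$ into the relation $n_1 = \sum_{i \ge 2} a_i n_i + kq$ inside $\ZZ_{\ge 0}$, and your three cases are exhausted exactly by minimality of $\{n_1,\dots,n_d\}$ (when $q=0$), irreducibility of $n_1$ (when $q \ge 1$ and the $S$-part or $q-1$ is positive), and the hypothesis $k \in S \setminus N$ (when $n_1 = k$). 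Note that your case analysis also rules out coincidences $k'n_i = kn_j'$ among the listed generators (such an equality lands in your third case), which is needed to count the minimal generating set as having exactly $d+d'$ elements. Two minor observations: the statement as reproduced leaves $N$ and $N'$ undefined, and your reading of them as the minimal generating sets is the intended one; and the hypotheses already force $k, k' \ge 2$ in any nondegenerate instance (if $k=1$ then $k \in N$, and if $k=0$ then $\gcd(k,k')=1$ forces $k'=1 \in N'$), which justifies phrases like ``sum of two strictly smaller positive elements of $S$'' where you invoke $kq \in S \setminus \{0\}$.
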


% With the above notation, for a face $F \subset [d + d']$, note that $F^{\le d}$ is a face of $S$, $F^{>d} - d$ is a face of $S'$, and
% $$n_F^{g}=n_F=\sum_{i\in F^{\le d}} k'n_i +\sum_{i\in F^{>d}-d} kn_{i-d}'=k'n_{F^{\le d}} + kn_{F^{>d}-d}'$$

\begin{thm}[Disjoint union]\label{t:disjointunion}
Fix numerical semigroups $S = \<n_1, n_2, \ldots, n_d\>$ and $S' = \<n_1', n_2', \ldots, n_{d'}'\>$, and let $T = k'S + kS'$.  For coprime $k \in S$ and $k' \in S'$,
$$\Delta_{kk'}^T = \Delta^S_k \cup (\Delta^{S'}_{k'} + d),$$
a disjoint union.
\end{thm}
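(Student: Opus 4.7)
The plan is to use Lemma~\ref{l:gluing} to label the $d+d'$ generators of $T$ so that indices $1,\ldots,d$ correspond to $k'n_1,\ldots,k'n_d$ and indices $d+1,\ldots,d+d'$ correspond to $kn_1',\ldots,kn_{d'}'$. Any face $F \subseteq [d+d']$ then decomposes uniquely as $F = F_1 \cup (F_2 + d)$ with $F_1 \subseteq [d]$ and $F_2 \subseteq [d']$; writing $a = \sum_{i \in F_1} n_i \in S$ and $b = \sum_{j \in F_2} n_j' \in S'$, the corresponding sum $n_F$ computed in $T$ is $k'a + kb$. Hence $F \in \Delta_{kk'}^T$ is equivalent to the existence of $u \in S$ and $v \in S'$ with $kk' - k'a - kb = k'u + kv$.

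I would next rearrange the defining equation as $k'(k - a - u) = k(v + b)$ and invoke $\gcd(k, k') = 1$ to parametrize the integer solutions as $u = k - a + kw$, $v = -k'w - b$ for some $w \in \ZZ$. The whole question then reduces to determining for which pairs $(a, b)$ an integer $w$ exists making $u \in S$ and $v \in S'$.

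The key dichotomy---and where I expect the main obstacle---is ruling out the case where $F_1$ and $F_2$ are both nonempty. If $a > 0$, then $u \geq 0$ forces $w \geq 0$ (otherwise $u = k - a + kw \leq -a < 0$), while if $b > 0$, then $v \geq 0$ forces $w \leq -1$ (since $k'w \leq -b < 0$). These two inequalities are incompatible, so every face of $\Delta_{kk'}^T$ lies entirely in $[d]$ or entirely in $\{d+1,\ldots,d+d'\}$. Once this dichotomy is established, the argument finishes by observing that when $b = 0$ the surviving constraints $u, v \geq 0$ force $w = 0$ (so $u = k - a$ and $v = 0$), reducing membership to $k - a \in S$, i.e., $F_1 \in \Delta_k^S$; the case $a = 0$ is symmetric and yields $F_2 \in \Delta_{k'}^{S'}$. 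The converse inclusion is immediate: any $F_1 \in \Delta_k^S$ gives $k - a \in S$, so $k'(k-a) \in k'S \subseteq T$, and likewise any face of $\Delta_{k'}^{S'}$ produces a face of $\Delta_{kk'}^T$ via $k(k'-b) \in kS' \subseteq T$. This establishes both inclusions and exhibits the union as disjoint on nonempty faces.
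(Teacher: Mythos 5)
Your proof is correct and follows essentially the same route as the paper's: both reduce membership in $\Delta_{kk'}^T$ to an equation $kk' = k'(a+u) + k(b+v)$ with $u \in S$, $v \in S'$, and both use $\gcd(k,k')=1$ to rule out faces meeting both $[d]$ and its complement and then to force $v=0$ for faces inside $[d]$. Your one-parameter family $(u,v) = (k-a+kw,\ -k'w-b)$ is just a repackaging of the paper's take-the-equation-modulo-$k$ (resp.\ modulo-$k'$) steps, with the sign analysis on $w$ playing the role of the paper's magnitude estimates.
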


\begin{proof}
First, note that the union is disjoint as $\Delta_k^S \subseteq 2^{[d]}$ while $\Delta_{k'}^{S'} + d \subseteq 2^{[d + d'] \setminus [d]}$.  
If $F \in \Delta^S_k$, then $k'k - k'n_F = k'(k - n_F) \in S$, so $kk' - n_F \in T$, meaning $F \in \Delta^T_{kk'}$.  By~symmetry, this implies $\Delta_{kk'}^T \supseteq \Delta^S_k \cup (\Delta^{S'}_{k'} + d)$.  

Conversely, fix $F\in \Delta^T_{kk'}$.  Suppose $F$ contains elements from both $[d]$ and $[d+d'] \setminus [d]$.  We can conclude $\{a, b\} \in \Delta^T_{kk'}$ for some $a \in [d]$ and $b \in [d + d'] \setminus [d]$.  This means $kk' - k'n_a - kn_{b-d}' \in T$, so we must have 
$$kk' = k'(n_a + m) + k(n_{b-d}' + m')$$
for some $m \in S$ and $m' \in S'$.  Taking the above equality modulo $k$, we see that $k \mid (n_a + m)$, and since $n_a \neq 0$, we must have $n_a + m \ge k$.  But now 
$$k'(n_a + m) + k(n_{b-d}' + m') \ge k'k + kn_{b-d}' > k'k,$$
which is a contradiction.  

Now, this means any $F \in \Delta^T_{kk'}$ must be entirely contained in $[d]$ or in $[d + d'] \setminus [d]$.  By~symmetry, it suffices to address the case $F \subseteq [d]$, wherein $k'k - k'n_F \in T$.  For some $m \in S$ and $m' \in S'$, we have $kk' - k'n_F = k'm + km'$.  Taking both sides modulo $k'$, we conclude $k' \mid m'$.  If $m' > 0$, then $m' \ge k'$, which yields a contradiction since then we would have $kk' \ge k'n_F + km' \ge k'n_F + kk'$.  Hence $m' = 0$ and dividing by $k'$ yields $k - n_{F} = m \in S$, so $F \in \Delta_k^S$.  By symmetry, we conclude $\Delta_{kk'}^T = \Delta^S_k \cup (\Delta^{S'}_{k'} + d)$.  
\end{proof}

\begin{example}\label{e:disjointunionrelativelyprime}
The ``coprime'' hypothesis in Theorem~\ref{t:disjointunion} can't be omitted.  For instance, if $k = 2 \in S = \<2,5\>$ and $k' = 6 \in S' = \<6, 10, 15\>$, then we have 
$T = k'S + kS' = \<12, 20, 30\>$ and $\Delta_2^S$, $\Delta_6^{S'}$ and $\Delta_{12}^T$ each have exactly one vertex.  
\end{example}

% Theorem~\ref{t:disjointunion} allows us to take simplicial complexes of $k,k'$ in two different numerical semigroups, and realize the disjoint union of those complexes in a third numerical semigroup.  This is, of course, provided that the necessary conditions on $k,k'$ hold.    We can always choose $S'$ appropriately to make those conditions hold, as in the following corollary.

\begin{thm}\label{t:inflation}
Fix a numerical semigroup $S = \<n_1, \ldots, n_d\>$, an element $m \in S$, and a set $F \subset [d]$.  Let $b = m - n_F$, fix $p \in \ZZ_{\ge 0}$ with $\gcd(p,b) = 1$, and write $T = pS + b\<1\>$.  If $b \in S \setminus \{0\}$ and $\Delta_{n_F} = 2^F$, then the squarefree divisor complex $\Delta_{pm}^T$ is the complex $\Delta_m^S$ with one new vertex $b$, and the only facet containing $b$ is $F \cup \{b\}$.  
\end{thm}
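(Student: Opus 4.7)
The plan is to describe every face of $\Delta_{pm}^T$ by splitting into two cases according to whether the face contains the new vertex corresponding to the generator $b$.

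As a preliminary step, I would apply Lemma~\ref{l:gluing} (with $S' = \<1\>$ and $k' = p$) to identify $T = \<pn_1, \ldots, pn_d, b\>$ as a numerical semigroup of embedding dimension $d+1$, and label the new vertex $d+1$. For an arbitrary face $G \subseteq [d+1]$, set $G' = G \cap [d]$, so that $G \in \Delta_{pm}^T$ unpacks to $p(m - n_{G'}) \in T$ when $d+1 \notin G$, and to $p(m - n_{G'}) - b \in T$ when $d+1 \in G$.

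In each case I would expand the membership in $T = pS + b\ZZ_{\ge 0}$ as $ps + bc$ with $s \in S$ and $c \ge 0$, reduce modulo $p$, and use $\gcd(p,b) = 1$ to force $c$ (respectively $c+1$) to be a nonnegative multiple of $p$. Absorbing the excess $b$-terms back into $S$ via $b \in S$ then collapses the representation to a single $S$-condition: $m - n_{G'} \in S$ in the first case, and $m - n_{G'} - b \in S$ in the second. The first case identifies with $G' \in \Delta_m^S$, and for the second I would substitute $b = m - n_F$ to obtain $n_F - n_{G'} \in S$, i.e.\ $G' \in \Delta_{n_F}^S$, which by hypothesis equals $2^F$, so $G' \subseteq F$. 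Each converse is a straightforward construction, and combining both cases gives $\Delta_{pm}^T = \Delta_m^S \cup \{G' \cup \{d+1\} : G' \subseteq F\}$, which is exactly $\Delta_m^S$ together with the single new facet $F \cup \{d+1\}$.

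The main technical step I expect is the modular reduction: carefully using $\gcd(p,b) = 1$ to eliminate the intermediate values of the $b$-coefficient so that any $T$-condition reduces to an $S$-condition. Once this is in hand, the case analysis and the identity $m - n_{G'} - b = n_F - n_{G'}$ — which is precisely where the hypothesis $\Delta_{n_F}^S = 2^F$ enters — finish the argument.
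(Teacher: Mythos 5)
Your proposal is correct and follows essentially the same route as the paper's proof: both hinge on reducing a factorization of $pm$ modulo $p$, using $\gcd(p,b)=1$ to force the coefficient of $b$ to be a multiple of $p$, absorbing the surplus copies of $b$ back into $S$ via $b \in S$, and invoking $\Delta_{n_F} = 2^F$ to confine every face containing the new vertex to $F \cup \{b\}$. Your two-case analysis is in fact marginally more complete, since you also verify explicitly that every face of $\Delta_{pm}^T$ avoiding the new vertex already lies in $\Delta_m^S$, a direction the paper leaves implicit after noting $\Delta_m^S \subset \Delta_{pm}^T$.
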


\begin{proof}
By Lemma~\ref{l:gluing}, each $pn_i$ is a minimal generator of $T$, so $\Delta_m^S \subset \Delta_{pm}^T$, and since
$$pm = p(m - n_F) + pn_F = pb + \sum_{i \in F} pn_i,$$
we have $F \cup \{b\} \in \Delta_{pm}^T$.  Now, we claim any face of $\Delta_{pm}^T$ containing $b$ lies in $F \cup \{b\}$.  Indeed, any factorization of $pm$ involving $b$ can be written as 
$$pm = a_0b +  \sum_{i = 1}^d a_ipn_i = a_0(m - n_F) + \sum_{i = 1}^d a_ipn_i,$$
and taking both sides modulo $p$ implies $p \mid a_0$.  Writing $a_0 = kp$ for $k \ge 1$, dividing by $p$, and adding $n_F - m$ to both sides yields, we have
$$n_F = (k - 1)(m - n_F) + \sum_{i = 1}^d a_in_i.$$
Since $b = m - n_F \in S$, the above yields a factorization for $n_F$, but since $\Delta_{n_F} = 2^F$, this means $a_i = 0$ for each $i \notin F$.  This completes the proof.  
\end{proof}

\begin{thm}\label{t:fattree}
Any fat tree $\Delta$ occurs as the squarefree divisor complex of some numerical semigroup element.  
\end{thm}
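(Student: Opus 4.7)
My plan is to induct on the number $n$ of facets of $\Delta$, using Theorem~\ref{t:inflation} to attach the last facet one new vertex at a time. Because Theorem~\ref{t:inflation} requires $\Delta_{n_F}^S = 2^F$ at the face $F$ being inflated along, I would strengthen the induction hypothesis: for every fat tree $\Delta$ and every face $G \in \Delta$, there exist $S$ and $m$ with $\Delta = \Delta_m^S$ and $\Delta_{n_G}^S = 2^G$.

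For the base case $n = 1$, $\Delta = 2^{[d]}$ is a single simplex, and I would use a ``free''-style construction: pick pairwise coprime integers $q_1, \ldots, q_d \ge 2$ with $\sum_i 1/q_i < 1/2$, let $Q = \prod_i q_i$, $n_i = Q/q_i$, $S = \<n_1, \ldots, n_d\>$, and $m = n_1 + \cdots + n_d$.  A modular argument would show $n_G - n_F \in S$ iff $F \subseteq G$ for every $F, G \subseteq [d]$: since $n_i \equiv 0 \pmod{q_j}$ for $i \ne j$ while $\gcd(n_j, q_j) = 1$, reducing an alleged nonnegative expression $n_G - n_F = \sum c_i n_i$ modulo $q_j$ for some $j \in F \setminus G$ forces $c_j \ge q_j - 1$, so $c_j n_j \ge Q(1 - 1/q_j) \ge Q/2$, contradicting the upper bound $n_G - n_F \le Q \sum_i 1/q_i < Q/2$.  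This yields $\Delta_{n_G}^S = 2^G$ for every $G \subseteq [d]$ simultaneously.

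For the inductive step, order the facets $F_1, \ldots, F_n$ so that each $G_j := F_j \cap (F_1 \cup \cdots \cup F_{j-1})$ is a simplex.  By induction applied to the sub-tree $F_1 \cup \cdots \cup F_{n-1}$, produce $(S, m)$ realizing it with $\Delta_{n_{G_n}}^S = 2^{G_n}$.  To attach $F_n$, iterate Theorem~\ref{t:inflation} along the flag $G_n \subsetneq G_n \cup \{v_1\} \subsetneq \cdots \subsetneq F_n$, where $\{v_1, \ldots, v_k\} = F_n \setminus G_n$, adding one new vertex per step.  A calculation parallel to the proof of Theorem~\ref{t:inflation} shows that each inflation preserves the simplex property of the newly enlarged simplex, so the next step's hypothesis is met.

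The hard part will be preserving the strengthened hypothesis for \emph{arbitrary} faces $H \in \Delta$ (not just the one being extended) across a chain of inflations.  A direct check reveals that inflating at $F$ with new vertex $b = m - n_F$ can introduce a spurious face $F' \cup \{b\}$ into $\Delta_{pn_H}^T$ precisely when $F' \subseteq H$, $n_H - n_{F'} \ge b$, and $n_H - n_{F'} - b \in S$.  All such spurious faces are excluded by requiring $b > n_H$ for every simplex $H$ of $\Delta$; because $m$ scales multiplicatively under inflation (from $m$ to $pm$), choosing the parameter $p$ sufficiently large at each outer step makes every subsequent $b$ arbitrarily large, guaranteeing the hypothesis survives.
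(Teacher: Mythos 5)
Your skeleton matches the paper's: build the fat tree by repeated applications of Theorem~\ref{t:inflation}, carrying an induction hypothesis strong enough to guarantee $\Delta_{n_F}=2^F$ at each face you inflate along. (The paper inducts on vertices rather than facets and carries the invariant that every sum of distinct generators is uniquely factorable, preserved by a mod-$p$ congruence rather than a size bound; your base case, which realizes a full simplex on $[d]$ directly, is a genuine improvement over the paper's.) The gap is in your final paragraph, and as written it is fatal.

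The condition $b>n_H$ cannot be arranged by taking $p$ large. First, $b=m-n_F$ is determined by the \emph{previous} stage's data before the current $p$ is chosen. Second, inflation rescales both sides by the same factor: the next $b'$ equals $p(m-n_{F'})$ (or $(p-1)b$ if $F'$ contains the new vertex), while the generator sums become $n^T_H=pn_H$ or $pn_H+b$, so whether $b'>n^T_H$ holds is decided by the un-inflated ratios, which $p$ does not improve. In your base case $m=n_{[d]}$, so the very first inflation has $b=n_{[d]\setminus F}\le n_{[d]}$, violating the requirement already. This is not harmless: take the fat tree with facets $\{1,2,3,4\}$, $\{1,2,5\}$, $\{3,4,6\}$. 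After realizing the simplex on $\{1,2,3,4\}$ and inflating at $\{1,2\}$ with $b=n_3+n_4$ to attach vertex $5$, the face $H=\{3,4\}$ has $n^T_H=p(n_3+n_4)=pb$, and $pb-b=(p-1)b\in T=pS+b\<1\>$, so $\{5\}\in\Delta^T_{n^T_H}$ and the hypothesis $\Delta_{n_H}=2^H$ needed to attach vertex $6$ along $\{3,4\}$ is destroyed; running the inflation anyway inserts the spurious face $\{5,6\}$ into the final complex (and reordering the facets merely swaps the roles of $5$ and $6$). To close the gap you need a different preservation mechanism --- either the paper's congruence invariant, under which every representation using the new generator $b$ is forced to use it $\equiv 0$ or $\equiv 1 \bmod p$ times, or a base case engineered so that $m$ exceeds $2n_{[d]}$ with the margin tracked explicitly through each inflation; the bare assertion that large $p$ makes every subsequent $b$ dominate every $n_H$ is false.
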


\begin{proof}
If $\Delta$ consists only of a simplex, then it equals the squarefree divisor complex of $1 \in \<1\>$.  From here, we proceed by induction on the number of vertices of $\Delta$.  Let $F_1, F_2, \ldots, F_k$ denote the facets of $\Delta$, let $\Delta' = F_1 \cup \cdots \cup F_{k-1}$, and suppose $F_k \cap \Delta'$ is a simplex $F \in \Delta'$ with $|F_k| = |F| + 1$.  For our induction, we assume that $\Delta' = \Delta_m^S$ with $S = \<n_1, \ldots, n_d\>$, and that any sum of distinct generators is uniquely factorable (in particular, this implies $\Delta_{n_G} = 2^G$ for any $G \subset [d]$).  Fix any prime $p > b = m - n_F$.  Since $F \in \Delta_m^S$, we have $b \in S$, so by Theorem~\ref{t:inflation}, $\Delta = \Delta_{pm}^T$ with $T = pS + b\<1\>$.  By~assumption on $p$, any sum of generators of $T$ including $b$ equals $b$ modulo $p$, and thus is also uniquely factorable.  This completes the proof.  
\end{proof}

\begin{cor}\label{c:fatforest}
Any fat forest occurs as the squarefree divisor complex of some numerical semigroup element.  
\end{cor}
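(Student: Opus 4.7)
The plan is to induct on the number of connected components of the fat forest $\Delta$ and combine the two theorems already established in this section. The base case of a single component is handled directly by Theorem~\ref{t:fattree}. For the inductive step, I would write $\Delta = \Delta' \cup (\Delta'' + d)$ as a disjoint union, where $\Delta'$ is a fat forest with one fewer component, $d$ is the number of vertices of $\Delta'$, and $\Delta''$ is a single fat tree occupying vertices $\{1, \ldots, d'\}$ before shifting. By the inductive hypothesis there exist a numerical semigroup $S = \<n_1, \ldots, n_d\>$ and an element $k \in S$ with $\Delta_k^S = \Delta'$, and by Theorem~\ref{t:fattree} there exist $S' = \<n_1', \ldots, n_{d'}'\>$ and $k' \in S'$ with $\Delta_{k'}^{S'} = \Delta''$. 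Provided $\gcd(k,k') = 1$, Theorem~\ref{t:disjointunion} then yields $\Delta = \Delta_{kk'}^{T}$ with $T = k'S + kS'$, completing the induction.

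The only real task is arranging $\gcd(k,k') = 1$. The proof of Theorem~\ref{t:fattree} builds the realizing element as a product $m = p_1 p_2 \cdots p_r$ of primes, each of which is chosen only to exceed a threshold $b$ appearing at the corresponding inductive step. I would exploit this freedom to strengthen the inductive statement: for any positive integer $N$, the fat forest can be realized as $\Delta_k^S$ with $\gcd(k, N) = 1$, accomplished by insisting at every stage that the prime chosen also exceed $N$ (any prime bigger than $N$ is automatically coprime to $N$). Applied at the merging step with $N$ set equal to $k$, this forces every prime used to realize $\Delta''$ to be coprime to $k$, hence $\gcd(k, k') = 1$ as required.

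I expect the entire difficulty of the proof to be concentrated in recognizing and propagating this coprimality condition; once that bookkeeping is in place, the corollary is an essentially mechanical two-ingredient combination of Theorems~\ref{t:fattree} and~\ref{t:disjointunion}, with no further combinatorial input about the structure of fat trees needed.
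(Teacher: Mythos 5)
Your proposal is correct and takes essentially the same approach as the paper, which likewise realizes each fat tree via Theorem~\ref{t:fattree} and then glues components with Theorem~\ref{t:disjointunion}, securing coprimality of the realizing elements by choosing the primes in Theorem~\ref{t:inflation} appropriately. Your explicit propagation of the condition $\gcd(k,N)=1$ through the induction is just a more careful rendering of the paper's one-line remark ``by choosing different primes.''
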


\begin{proof}
Theorem~\ref{t:fattree} implies any fat tree can be realized, and by choosing different primes in Theorem~\ref{t:inflation}, their disjoint union can be realized by Theorem~\ref{t:disjointunion}.  
\end{proof}

% This simple gluing corollary leads to the following theorem on realizable Euler characteristics.

% \begin{cor}\label{glue-point}
% Let $S=\langle n_1, n_2, \ldots, n_d\rangle$, and let $k\in S\setminus \{n_1, n_2, \ldots, n_d\}$.  Choose any prime $p$ with $p\nmid k$, and set $S'=\langle 1\rangle$.  Then $\Delta^{pS+kS'}_{pk}=\Delta^S_k\cup (\Delta^{S'}_{p}+d)$.
% \end{cor}

% \begin{thm}\label{Euler-less}
% Let $S$ be a numerical semigroup, $\Delta_m^S$ a squarefree divisor complex, and $\chi(\Delta_m^S)$ its Euler characteristic.  Then, for any integer $r\le \chi(\Delta_m^S)$, there is a numerical semigroup $S'$ and a squarefree divisor complex $\Delta_{m'}^{S'}$, with $\chi(\Delta_{m'}^{S'})=r$.
% \end{thm}
% \begin{proof}
% We repeatedly apply Corollary \ref{glue-point}.  Each time, the Euler characteristic decreases by one, since  $\chi(\Delta^{pS+kS'}_{pk})=\chi(\Delta^S_k) +\chi(\Delta^{S'}_{p})-1=\chi(\Delta^S_k) -1$.
% \end{proof}

%%%%%%%%%%%%%%%%%%%%%%%%%%%%%%%%%%%%%%%%%%%%%%%%%%%%%%%%%%%%%%%%%%%%%%%%%
\section{Elements with nonzero Euler characteristic}%%%%%%%%%%%%%%%%%%%%%
\label{sec:euler}%%%%%%%%%%%%%%%%%%%%%%%%%%%%%%%%%%%%%%%%%%%%%%%%%%%%%%%%
%raggedbottom%%%%%%%%%%%%%%%%%%%%%%%%%%%%%%%%%%%%%%%%%%%%%%%%%%%%%%%%%%%%

In this section, we turn our attention to particular classes of numerical semigroups.

\subsection{Supersymmetric numerical semigroups}

Suppose $t_1, t_2, \ldots, t_d \in\ZZ_{\ge 2}$ are pairwise coprime, and let $L = t_1 t_2 \cdots t_d$.  Numerical semigroups of the form $\<L/t_1, \ldots, L/t_d\>$ are called \emph{supersymmetric} \cite{denumerant}.  Equivalently, a numerical semigroup $\<n_1, n_2, \ldots, n_d\>$ is supersymmetric if $L/n_1, \ldots, L/n_d$ are pairwise coprime, where $L = \lcm(n_1, \ldots, n_d)$.

% Supersymmetric numerical semigroups produce uniform squarefree divisor complexes.

\begin{thm}\label{t:supersymmetric1}
Suppose $t_1, t_2, \ldots, t_d \in \ZZ_{\ge 2}$ are pairwise coprime, let $L = t_1 t_2 \cdots t_d$, and let $S = \<L/t_1, \ldots, L/t_d\>$.  If $k \in \ZZ_{\ge 0}$, then $\Delta_{kT}$ contains exactly those faces $F$ satisfying $|F| \le k$.  Further, $\chi(\Delta_{kL})=(-1)^k\binom{d-1}{k}$, which is zero precisely when $k \ge d$.
\end{thm}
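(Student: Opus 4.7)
The plan is to show $\Delta_{kL} = \{F \subseteq [d] : |F| \le k\}$ by proving both containments, and then compute the Euler characteristic via a classical binomial identity.

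For the forward containment, given $F$ with $|F| = j \le k$, I will rewrite
\[ kL - \sum_{i \in F} L/t_i = (k-j)L + \sum_{i \in F}(t_i - 1)(L/t_i). \]
Each term on the right lies in $S$ (since $L = t_i \cdot (L/t_i) \in S$ for any single choice of $i$, and each $(t_i-1)(L/t_i)$ is a nonnegative multiple of a generator), so the whole sum is in $S$ and $F \in \Delta_{kL}$.

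For the reverse containment, I will suppose $F \in \Delta_{kL}$ with $|F| = j$, so there exist $a_1,\ldots,a_d \in \ZZ_{\ge 0}$ satisfying
\[ kL = \sum_{i \in F}(a_i + 1)(L/t_i) + \sum_{i \notin F} a_i (L/t_i). \]
The key step is to reduce this equation modulo $t_m$ for each $m \in F$. Since $L/t_i$ is the product of all $t_j$ with $j \ne i$, it is divisible by $t_m$ whenever $i \ne m$; and $L$ is also divisible by $t_m$. Thus $(a_m+1)(L/t_m) \equiv 0 \pmod{t_m}$. Pairwise coprimality gives $\gcd(L/t_m, t_m) = 1$, so $t_m \mid (a_m+1)$, and in particular $a_m + 1 \ge t_m$. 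Substituting back,
\[ kL \ge \sum_{i \in F}(a_i+1)(L/t_i) \ge \sum_{i \in F} t_i \cdot (L/t_i) = jL, \]
so $k \ge j = |F|$, as required.

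For the Euler characteristic, I will use
\[ \chi(\Delta_{kL}) = \sum_{j=0}^{\min(k,d)} (-1)^j \binom{d}{j}, \]
which equals $(1-1)^d = 0$ when $k \ge d$, and by the standard identity $\sum_{j=0}^k (-1)^j \binom{d}{j} = (-1)^k \binom{d-1}{k}$ (a quick induction using Pascal's rule) equals $(-1)^k \binom{d-1}{k}$ when $k < d$. This vanishes precisely when $k \ge d$. The main obstacle is the reverse containment: the coprimality hypothesis enters essentially through the modular reductions, and the insight is to reduce modulo $t_m$ rather than modulo the generator $L/t_m$, so that the coefficient $a_m$ is isolated. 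Everything else is routine algebraic manipulation.
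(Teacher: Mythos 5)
Your proof is correct and follows essentially the same route as the paper's: the same explicit factorization $(k-|F|)L + \sum_{i\in F} t_i(L/t_i)$ for the forward containment, the same reduction modulo $t_m$ to force $t_m \mid (a_m+1)$ and hence $kL \ge |F|L$ for the reverse containment (the paper phrases this as a contradiction with $|F| \ge k+1$ rather than a direct bound $k \ge |F|$), and the same binomial identity for the Euler characteristic.
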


\begin{proof}
For any face $F \subseteq [d]$ with $|F| \le k$, we have
$$kT = (k - |F|)L + |F|L = (k - |F|)t_1(L/t_1) + \sum_{i \in F} t_i(L/t_i) \in S,$$
meaning $F \in \Delta_{kL}$.  On the other hand, suppose $|F| \ge k + 1$ and $kL - n_F \in S$, so there is some factorization $kL = \sum_{i=1}^d a_i (L/t_i)$ with $a_i > 0$ for each $i \in F$.  For each~$i$, taking the above equation modulo $t_i$, we conclude $t_i \mid a_i$.  As such, 
$$kL = \sum_{i=1}^d a_i (L/t_i) \ge \sum_{i \in F} a_i (L/t_i) \ge \sum_{i \in F} t_i (L/t_i) \ge (k+1)L,$$
which is impossible, so we conclude $F \notin \Delta_{kL}$.  

The last statement follows from the observation that there are $\binom{d}{i}$ faces with size $i$ and the well-known identity 
$\sum_{i=0}^k (-1)^i \binom{d}{i} = (-1)^k\binom{d-1}{k};$
see, e.g.\ \cite[p.~165]{MR1397498}.
\end{proof}

Theorem~\ref{t:supersymmetric1} identifies certain elements of supersymmetric $S$ that have nonzero Euler characteristic.  In fact, these are the only such elements.

\begin{thm}\label{t:supersymmetric2}
Suppose $t_1, t_2, \ldots, t_d \in \ZZ_{\ge 2}$ are pairwise coprime, let $L = t_1 t_2 \cdots t_d$, and let $S = \<L/t_1, \ldots, L/t_d\>$.  If $m \in S$ and $L \nmid m$, then $\chi(\Delta_m) = 0$.
\end{thm}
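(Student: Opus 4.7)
The plan is to show that whenever $L \nmid m$, the complex $\Delta_m$ is a cone over some vertex $j \in [d]$, which immediately forces $\chi(\Delta_m) = 0$ by pairing each face $F \not\ni j$ with $F \cup \{j\}$.

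First, I would analyze how the residues of factorization coefficients of $m$ are constrained. Any expression $m = \sum_{i=1}^d a_i(L/t_i)$ reduced modulo $t_j$ leaves only the $i = j$ term, since $t_j \mid L/t_i$ for $i \neq j$. Thus $a_j(L/t_j) \equiv m \pmod{t_j}$, and because $\gcd(L/t_j, t_j) = 1$, the residue $a_j \bmod t_j$ is uniquely determined by $m$. In particular, this residue is independent of which factorization of $m$ one chooses.

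Next I would exploit the hypothesis $L \nmid m$. Since $L = t_1 \cdots t_d$ and the $t_i$ are pairwise coprime, $L \nmid m$ implies $t_j \nmid m$ for some $j \in [d]$. For that index, the forced residue $a_j \bmod t_j$ is nonzero, which means $a_j \geq 1$ in \emph{every} factorization $m = \sum a_i(L/t_i)$. Now for any $F \in \Delta_m$, the element $m - n_F \in S$ produces a factorization of $m$ in which $a_i \geq 1$ for all $i \in F$; by the preceding sentence this same factorization also has $a_j \geq 1$, so $F \cup \{j\} \in \Delta_m$. Hence $\Delta_m$ is a cone with apex $j$.

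Finally, since $F \in \Delta_m$ iff $F \cup \{j\} \in \Delta_m$, the faces of $\Delta_m$ partition into pairs $\{F, F \cup \{j\}\}$ with $j \notin F$, and
\[
\chi(\Delta_m) = \sum_{F \in \Delta_m,\, j \notin F}\left[(-1)^{|F|} + (-1)^{|F|+1}\right] = 0.
\]
The only real work is the modular computation identifying the forced coordinate $a_j$; the cone-pairing conclusion is then immediate, so I do not anticipate a significant obstacle.
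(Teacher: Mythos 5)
Your proof is correct and follows essentially the same route as the paper: both arguments show that the residue of the coefficient $a_j$ modulo $t_j$ is forced by $m$, use $L \nmid m$ to find an index $j$ where that residue is nonzero (so every factorization has $a_j \ge 1$), and conclude that $\Delta_m$ is a cone with apex $j$, whence $\chi(\Delta_m)=0$ by the standard pairing. Your presentation is a slightly more direct version of the paper's reduction-of-coefficients argument, but the key idea is identical.
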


\begin{proof}
Consider two factorizations $m = \sum a_i (L/t_i) = \sum b_i (L/t_i)$.  For each $i$, define $\ol a_i$ to be the unique integer in $[0,a_i)$ congruent to $a_i$ modulo $t_i$, and define $\ol b_i$ similarly.  We can then write 
$$m = qL + \textstyle\sum \ol a_i (L/t_i) = q'L + \textstyle\sum \ol b_i (L/t_i)$$
for some $q, q' \in \ZZ_{\ge 0}$.  For each $i$, we take the above equality modulo $t_i$ to conclude $\ol a_i \equiv \ol b_i \bmod t_i$.  However, $0 \le \ol a_i, \ol b_i < t_i$, so $\ol a_i = \ol b_i$ for all $i$.  Furthermore, $q = q'$.

Now, let $\ol F = \{i : \ol a_i > 0\}$, which must be nonempty since $L \nmid m$, and fix $j \in \ol F$.  By the above argument, every factorization $m = \sum a_i (L/t_i)$ has $a_j > 0$, meaning if $G \subset [d]$ with $j \notin G$, then $G \in \Delta_m$ if and only if $G \cup \{j\} \in \Delta_m$.  As such, we conclude $\chi(\Delta_m) = 0$ since $G$ and $G \cup \{j \}$ have opposite signs in the formula for $\chi(\Delta_m)$.  
\end{proof}

% With supersymmetric numerical semigroups, we can prove that all Euler characteristics are realizable with squarefree divisor complexes.

% \begin{thm}Let $r\in\mathbb{Z}$.  Then there is a numerical semigroup $S$ and a squarefree divisor complex $\Delta_m^S$, with $\chi(\Delta_m^S)=r$.
% \end{thm}
% \begin{proof}
% Choose $d\in\mathbb{N}\setminus\{1\}$ so that $\binom{d-1}{2}\ge r$.  Let $t_1, t_2, \ldots, t_d$ be arbitrary, distinct, primes.  Set $T=t_1t_2\cdots t_d$, and $S=\langle \frac{T}{t_1}, \frac{T}{t_2}, \ldots, \frac{T}{t_d}\rangle$. By Theorem \ref{supersymmetric},  $\chi(\Delta_{2T})=\binom{d-1}{2}$.  We now apply Theorem \ref{Euler-less}.
% \end{proof}

\subsection{Embedding dimension 3 numerical semigroups}

We conclude by classifying the squarefree divisor complexes with nonzero Euler characteristic in any minimally 3-generated numerical semigroup.  
% Since the full simplex $\Sigma$ satisfies $\chi(\Sigma)=0$, we have $\chi(\Sigma')=(-1)^{d+1}$.  We determine those semigroup elements yielding this squarefree divisor complex in the following.  

\begin{thm}\label{t:pseudofrob}
Fix a numerical semigroup $S = \<n_1, \ldots, n_d\>$, and let $n = n_1 + \cdots + n_d$.  For any $m \in S$, we have $\Delta_m = 2^{[d]} \setminus \{[d]\}$ if and only if $m - n \in PF(S)$.
\end{thm}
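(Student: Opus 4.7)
The plan rests on the algebraic identity $m - n_F = (m - n) + n_{[d] \setminus F}$, which converts the condition $F \in \Delta_m$ for a face $F \subseteq [d]$ into the statement that $(m - n) + n_{[d] \setminus F}$ lies in $S$. With this identity in hand, both directions of the biconditional become essentially bookkeeping about which sums $(m - n) + s$ land in $S$, matched against the definition of $PF(S)$.

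For the easier direction ($\Leftarrow$), I would assume $m - n \in PF(S)$. Then $m - n \notin S$ gives $[d] \notin \Delta_m$ directly. For any proper subset $F \subsetneq [d]$, the complement $[d] \setminus F$ is nonempty, so $n_{[d] \setminus F} \in S \setminus \{0\}$, and the pseudo-Frobenius property of $m - n$ delivers $m - n_F = (m - n) + n_{[d] \setminus F} \in S$, i.e., $F \in \Delta_m$. This yields $\Delta_m = 2^{[d]} \setminus \{[d]\}$.

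For the forward direction ($\Rightarrow$), assume $\Delta_m = 2^{[d]} \setminus \{[d]\}$. The absence of $[d]$ yields $m - n \notin S$, while the presence of each codimension-one face $[d] \setminus \{j\}$ yields $m - n + n_j \in S$ for every $j$. To upgrade this to the full pseudo-Frobenius property, I would take any $s \in S \setminus \{0\}$, write $s = n_j + s'$ with $s' \in S$, and observe that $(m - n) + s = (m - n + n_j) + s' \in S$. The nontrivial point, which I expect to be the main obstacle, is that the paper defines $PF(S) \subseteq \ZZ_{\ge 0}$, so the case $m < n$ must be excluded. For this I would argue by contradiction: if $m < n$, then $m - n + n_1$ is an element of $S$ strictly less than $n_1$, hence equal to $0$, forcing $m = n - n_1$. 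Substituting into $m - n + n_j \in S$ then gives $n_j - n_1 \in S$ for each $j \ge 2$, and the decomposition $n_j = n_1 + (n_j - n_1)$ expresses $n_j$ as a sum of two strictly smaller positive elements of $S$, contradicting the irreducibility of $n_j$ (with the trivial $d = 1$ case handled separately).
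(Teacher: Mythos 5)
Your argument is correct and follows the same route as the paper's proof: the identity $m - n_F = (m - n) + n_{[d]\setminus F}$ drives the reverse direction, and the decomposition $s = n_j + s'$ with $F = [d]\setminus\{j\}$ drives the forward direction. Your one genuine addition is the verification that $m - n \ge 0$ (needed because the paper defines $PF(S) \subseteq \ZZ_{\ge 0}$), a point the paper's proof silently skips; just phrase that step with the \emph{smallest} generator in place of $n_1$, since the paper does not fix an ordering of the $n_i$.
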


\begin{proof}
First, suppose $m - n \in PF(S)$.  This means $m - n \notin S$, but $m - n_F \in S$ for each proper subset $F \subsetneq [d]$, so each such $F \in \Delta_m$.  Conversely, suppose $\Delta_m = 2^{[d]} \setminus \{[d]\}$.  Since $[d] \notin \Delta_m$, $m - n \notin S$, but given any positive $s \in S$, writing $s = n_j + s'$ for some $s' \in S$ and some $j$, and letting $F = [d] \setminus \{j\}$, we have
$$m - n + s = m - (n - n_j) + s' = m - n_F + s' \in S,$$
which implies $m - n \in PF(S)$.
\end{proof}

% We now move to characterize all squarefree divisor complexes with nonzero Euler characteristic, with $d = 3$.  

% That book analyzes graphs $G_n$, which are squarefree divisor complexes restricted to graphs, i.e. faces of cardinality at most $2$.  It presents the following  characterization of disconnected  graphs with $d=3$.

We combine the above result with the following known result to prove Theorem~\ref{t:3gen}.

\begin{thm}[{\cite[Example~7.23]{MR2549780}}]\label{t:disconnected3gen}
For a numerical semigroup $S = \<n_1, n_2, n_3\>$, let 
$$m_i = \min \{m \in \ZZ_{\ge 0} : n_i \mid m \text{ and } m \in \<n_j, n_k\>\}$$ for each $i \in \{1,2,3\}$, where $\{i,j,k\} = \{1,2,3\}$.  The elements $m_1, m_2, m_3 \in S$ are the only elements of $S$ with disconnected squarefree divisor complexes.
\end{thm}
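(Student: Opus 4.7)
The plan is to prove the two implications separately: first that each $m_i$ has a disconnected squarefree divisor complex, and conversely that any $m \in S$ with $\Delta_m$ disconnected must equal some $m_i$.

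For the forward direction, fix $i$ and write $m_i = c_i n_i$ where $c_i \geq 1$ is the minimal positive integer satisfying $c_i n_i \in \<n_j, n_k\>$. I would first check that $\{i\} \in \Delta_{m_i}$ since $m_i - n_i = (c_i - 1) n_i \in S$, and that at least one of $\{j\}, \{k\}$ also lies in $\Delta_{m_i}$ since $m_i \in \<n_j, n_k\> \setminus \{0\}$. The crux is to show $\{i, j\}, \{i, k\} \notin \Delta_{m_i}$: if $m_i - n_i - n_j$ were in $S$, say $m_i - n_i - n_j = \alpha n_i + \beta n_j + \gamma n_k$, rearrangement would give $(c_i - 1 - \alpha) n_i = (\beta + 1) n_j + \gamma n_k$ with positive left-hand side, exhibiting a positive multiple of $n_i$ strictly less than $c_i n_i$ that lies in $\<n_j, n_k\>$, contradicting minimality of $c_i$. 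This isolates $\{i\}$ inside $\Delta_{m_i}$ and makes it disconnected.

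For the reverse direction, assume $\Delta_m$ is disconnected. Since the vertex set is contained in $[3]$, the only disconnected configurations are either at least two of $\{1\},\{2\},\{3\}$ as vertices with no edges among those present, or a single edge $\{j, k\}$ together with an isolated vertex $\{i\}$; in either case there exists $i$ with $\{i\} \in \Delta_m$ and $\{i, j\}, \{i, k\} \notin \Delta_m$. From $m - n_i \in S$ expressed as $\alpha n_i + \beta n_j + \gamma n_k$, any positive $\beta$ or $\gamma$ would place the forbidden edge in $\Delta_m$, so $m = (\alpha + 1) n_i$ is a pure multiple of $n_i$; call it $c n_i$. Since disconnectedness also forces another vertex or the edge $\{j,k\}$ into $\Delta_m$, we obtain $m \in \<n_j, n_k\>$, so $c \geq c_i$ by definition. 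If $c > c_i$, fix any factorization $c_i n_i = a_j n_j + a_k n_k$ with, without loss of generality, $a_j \geq 1$; then $m - n_i - n_j = (c - c_i - 1) n_i + (a_j - 1) n_j + a_k n_k \in S$, reintroducing $\{i, j\}$ into $\Delta_m$ and contradicting isolation. Hence $c = c_i$ and $m = m_i$.

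The main obstacle I anticipate is the book-keeping for configurations in which more than one vertex is simultaneously isolated, such as $\Delta_m = \{\emptyset, \{1\}, \{2\}\}$ or $\Delta_m = \{\emptyset, \{1\}, \{2\}, \{3\}\}$: applying the argument to each isolated index forces coincidences like $m_i = m_j$, and one must verify that these coincidences are consistent with the semigroup structure and still place $m$ inside $\{m_1, m_2, m_3\}$. Beyond that, the proof reduces to straightforward factorization manipulations leveraging the minimality definition of $c_i$.
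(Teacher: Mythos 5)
Your argument is correct, but there is nothing in the paper to compare it against: Theorem~\ref{t:disconnected3gen} is imported from \cite[Example~7.23]{MR2549780} and stated without proof, so you have supplied a self-contained elementary proof where the authors simply cite the literature. Your two directions both check out. Writing $m_i = c_i n_i$ with $c_i$ the least \emph{positive} multiplier (the intended reading --- as literally written the paper's $\min$ over $\ZZ_{\ge 0}$ would give $m_i=0$), the key computation $(c_i - 1 - \alpha)n_i = (\beta+1)n_j + \gamma n_k$ correctly contradicts minimality of $c_i$ and isolates the vertex $i$; and in the converse, the observation that an isolated vertex $i$ forces every factorization of $m - n_i$ to avoid $n_j$ and $n_k$ (hence $m = cn_i$), while the presence of a second component forces a factorization of $m$ avoiding $n_i$ (hence $m \in \langle n_j, n_k\rangle$), followed by the descent step ruling out $c > c_i$, is exactly the right mechanism. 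Your worry in the final paragraph is unfounded: the converse argument applies verbatim to \emph{any} isolated vertex, and it does not matter whether several indices are simultaneously isolated or whether coincidences $m_i = m_j$ occur --- each isolated vertex $i$ independently yields $m = m_i$, which already places $m$ in $\{m_1,m_2,m_3\}$. The only point worth tightening is the sentence deriving $m \in \langle n_j,n_k\rangle$ from ``another vertex or the edge $\{j,k\}$'': you should say explicitly that a factorization of $m - n_j$ (or of $m - n_j - n_k$) containing $n_i$ would create the forbidden edge $\{i,j\}$, which is the symmetric version of the argument you gave for $m - n_i$.
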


% Further, if $m_1=m_2=m_3$, then the single disconnected squarefree divisor complex is $\Delta_{m_1}=\{n_1, n_2, n_3\}$.  If $m_1\neq m_2=m_3$, then $\Delta_{m_1}=\{n_1, n_2n_3, n_2, n_3\}$ while $\Delta_{m_2}=\{n_2, n_3\}$.  If the three $m_i$ are distinct, then $\Delta_{m_1}=\{n_1, n_2n_3, n_2, n_3\}$, $\Delta_{m_2}=\{n_2, n_1n_3, n_1, n_3\}$, and $\Delta_{m_3}=\{n_3, n_1n_2, n_1, n_2\}$.

% \begin{thm}[{\cite[Corollary~9.22]{MR2549780}}]\label{t:pseudofrob3gen}
% For any numerical semigroup $S = \<n_1, n_2, n_3\>$, we have $|PF(S)| \le 2$, and equality holds if the generators of $S$ are pairwise coprime.
% \end{thm}

\begin{thm}\label{t:3gen}
Fix a numerical semigroup $S = \<n_1, n_2, n_3\>$, and define $m_1. m_2, m_3$ as above.  If $m \in S$, then $\chi(\Delta_m) \neq 0$ if and only if $m \in B = \{0,m_1, m_2, m_3\} \cup PF(S)$.  
% In particular, at most 6 elements $m \in S$ have $\chi(\Delta_m) \ne 0$.
\end{thm}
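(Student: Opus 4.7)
The plan is a case analysis on the structure of $\Delta_m$, leveraging the fact that $d = 3$ makes $2^{[3]}$ small: every simplicial complex on $[3]$ containing $\emptyset$ is parameterized by the triple $(v, e, t)$, where $v$ counts singleton faces, $e$ counts two-element faces, and $t \in \{0, 1\}$ indicates whether $\{1, 2, 3\}$ is a face. For any such complex, $\chi(\Delta) = 1 - v + e - t$, and enumerating legal triples (subject to $e \le \binom{v}{2}$ and $t = 1 \Rightarrow e = 3$) one finds $\chi \ne 0$ precisely when $(v, e, t) \in \{(0,0,0), (2,0,0), (3,0,0), (3,1,0), (3,3,0)\}$.

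From here I would match each such triple to the relevant subset of $B$. First, $(v,e,t) = (0,0,0)$ means $\Delta_m = \{\emptyset\}$, so no generator $n_i$ satisfies $m - n_i \in S$; since every positive element of $S$ admits a factorization with some generator appearing, this forces $m = 0$. Next, the triples $(2,0,0)$, $(3,0,0)$, and $(3,1,0)$ are exactly the nonempty disconnected subcomplexes of $2^{[3]}$, so Theorem~\ref{t:disconnected3gen} handles all three at once: they arise precisely when $m \in \{m_1, m_2, m_3\}$. Finally, $(3,3,0)$ is the condition $\Delta_m = 2^{[3]} \setminus \{[3]\}$, which by Theorem~\ref{t:pseudofrob} is equivalent to $m - (n_1 + n_2 + n_3) \in PF(S)$, contributing the pseudo-Frobenius family to $B$.

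The main obstacle is simply verifying that the enumeration is exhaustive. One must check that the remaining configurations $(1,0,0)$, $(2,1,0)$, $(3,2,0)$, and $(3,3,1)$ --- a single vertex, a single edge, a two-edge path, and a filled triangle --- each yield $\chi = 0$, which is a one-line computation in each case. Once that is in place, taking the union of the three families identified above gives exactly $B$ and completes the proof.
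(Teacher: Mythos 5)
Your proposal is correct and follows essentially the same route as the paper: enumerate the simplicial complexes on $[3]$ with nonzero Euler characteristic (your $(v,e,t)$ bookkeeping just makes the paper's ``up to symmetry'' list explicit), then dispatch the empty complex to $m=0$, the three disconnected types to Theorem~\ref{t:disconnected3gen}, and the boundary of the triangle to Theorem~\ref{t:pseudofrob}. The only difference is presentational, so no further comparison is needed.
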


\begin{proof}
Up to symmetry, the only simplicial complexes on $[3]$ with nonzero Euler characteristic are those with facet sets $\emptyset, \{1,2\}, \{1,2,3\}, \{12,3\}$, and $\{12,13,23\}$.  Of these, the first only occurs as $\Delta_0$, Theorem~\ref{t:pseudofrob} implies the last complex coincides with $\Delta_m$ precisely when $m \in PF(S)$, and the remaining complexes are disconnected, which can only occur for some $m_i$ by Theorem~\ref{t:disconnected3gen}.  The final claim follows from the fact that $|PF(S)| \le 2$ by \cite[Corollary~9.22]{MR2549780}.  
\end{proof}

\begin{remark}\label{r:addfrobenius}
Let $n_1, n_2 \in \ZZ_{\ge 2}$ with $\gcd(n_1, n_2) = 1$, let 
$$n_3 = n_1n_2 - n_1 - n_2 = F(\<n_1, n_2\>),$$
and let $S = \<n_1, n_2, n_3\>$.  This family of 3-generated numerical semigroups arises in constructing the Frobenius variety~\cite{frobvariety}, and its factorization properties are of interest~\cite{deltalarge}.  Using Theorem~\ref{t:3gen}, we compute 
$$B = \{0, n_2 + n_3, n_1 + n_3, 2n_3, n_1 + 2n_3, n_2 + 2n_3\}.$$
\end{remark}

\bibliographystyle{plain}
\bibliography{aglopy}

%%%%%%%%%%%%%%%%%%%%%%%%%%%%%%%%%%%%%%%%%%%%%%%%%%%%%%%%%%%%%%%%%%%%%%%%%
\end{document}